%
\documentclass[12pt, reqno]{amsart}
\usepackage{amsmath, amsthm, amscd, amsfonts, amssymb, graphicx, color}
\usepackage[bookmarksnumbered, colorlinks, plainpages]{hyperref}
\hypersetup{colorlinks=true,linkcolor=red, anchorcolor=green, citecolor=cyan, urlcolor=red, filecolor=magenta, pdftoolbar=true}

\textheight 22.7truecm \textwidth 15.2truecm
\setlength{\oddsidemargin}{0.35in}\setlength{\evensidemargin}{0.35in}

\setlength{\topmargin}{-.5cm}

\newtheorem{theorem}{Theorem}[section]

\theoremstyle{definition}

\theoremstyle{remark}

\numberwithin{equation}{section}

\begin{document}

\setcounter{page}{1}

\title[Global existence and blow-up ...]{Global existence and blow-up for a space and time nonlocal reaction-diffusion equation}

\author[A. Alsaedi, M. Kirane \MakeLowercase{and} B. T. Torebek]{Ahmed Alsaedi, Mokhtar Kirane$^{*}$ \MakeLowercase{and} Berikbol T. Torebek}

\address{\textcolor[rgb]{0.00,0.00,0.84}{Ahmed Alsaedi \newline NAAM Research Group, Department of Mathematics, \newline Faculty of Science, King Abdulaziz University, \newline P.O. Box 80203, Jeddah 21589, Saudi Arabia}}
\email{\textcolor[rgb]{0.00,0.00,0.84}{aalsaedi@hotmail.com}}

\address{\textcolor[rgb]{0.00,0.00,0.84}{Mokhtar Kirane\newline LaSIE, Facult\'{e} des Sciences, \newline Pole Sciences et Technologies, Universit\'{e} de La Rochelle \newline Avenue M. Crepeau, 17042 La Rochelle Cedex, France \newline NAAM Research Group, Department of Mathematics, \newline Faculty of Science, King Abdulaziz University, \newline P.O. Box 80203, Jeddah 21589, Saudi Arabia}}
\email{\textcolor[rgb]{0.00,0.00,0.84}{mkirane@univ-lr.fr}}

\address{\textcolor[rgb]{0.00,0.00,0.84}{Berikbol T. Torebek \newline Al--Farabi Kazakh National University \newline Al--Farabi ave. 71, 050040, Almaty, Kazakhstan \newline Institute of
Mathematics and Mathematical Modeling \newline 125 Pushkin str.,
050010 Almaty, Kazakhstan \newline Department of Mathematics: Analysis, Logic and Discrete Mathematics \newline Ghent University \newline Krijgslaan 281, Building S8, B 9000 Ghent, Belgium}}
\email{\textcolor[rgb]{0.00,0.00,0.84}{berikbol.torebek@ugent.be}}


\let\thefootnote\relax\footnote{$^{*}$Corresponding author}

\subjclass[2010]{Primary 35B50; Secondary 26A33, 35K55, 35J60.}

\keywords{Caputo derivative, fractional Laplacian, reaction-diffusion equation, global existence, blow-up.}

\begin{abstract} A time-space fractional reaction-diffusion equation in a bounded domain is considered. Under some conditions on the initial data, we show that
solutions may experience blow-up in a finite time. However, for realistic initial conditions, solutions are global in time. Moreover, the asymptotic behavior of bounded solutions is analysed.
\end{abstract} \maketitle
\section{Introduction}
In this paper, we consider the time and space fractional equation
\begin{equation}\label{1}\partial^\alpha_tu+(-\Delta)_{\Omega}^su=-u(1-u),\, x\in\Omega, t>0,
\end{equation}
$\Omega \subset \mathbb{R}^{n}$ is a bounded domain, supplemented with the homogeneous boundary condition
\begin{equation}\label{2}u=0,\,\,x\in \mathbb{R}^n\setminus\Omega,\,t>0,
\end{equation}
and with the initial condition
\begin{equation}\label{3}u(x,0)=u_0(x),\,\,x\in\Omega,
\end{equation}
where $\partial_t^\alpha$ is the Caputo fractional derivative (see \cite[page 90]{Kilbas})
\[
\partial_t^\alpha u(x,t)=\frac{1}{\Gamma(1-\alpha)}\int\limits_0^t(t-\tau)^{-\alpha}\partial_\tau u(x,\tau)d\tau,
\]
of order $\alpha\in (0,1],$ defined for a differentiable function $u,$ and $\left(-\Delta\right)_{\Omega}^s$ is the regional fractional Laplacian
\begin{align*}
\left(-\Delta\right)_{\Omega}^s u(x,t)&=C_{n,s}\, \textrm{P.V.} \int\limits_{\Omega}\frac{u(x,t)-u(\xi,t)}{|x-\xi|^{n+2s}} \, d\xi
\\&= \displaystyle\lim_{\varepsilon\downarrow 0}\left(-\Delta\right)_{\Omega, \varepsilon}^su(x,t), x \in \Omega,\end{align*}
defined for  $u(x,t)$ such that
\[
\int_{\Omega}\frac{\vert u(x,t)\vert}{(1+\vert x\vert)^{n+2s}}\, dx < \infty,
\]
where $s\in (0,1)$ and $C_{n,s}$ is a normalizing constant (whose value is not important here), and
\[
\left(-\Delta\right)_{\Omega,\varepsilon}^s u(x,t) = C_{n,s}\,
\int_{\left\lbrace  \xi \in \Omega : |x-\xi| > \varepsilon \right\rbrace } \frac{u(x,t)-u(\xi,t)}{|x-\xi|^{n+2s}} \, d\xi, \; x \in \mathbb{R}^n.
\]
The domain of the operator $(-\Delta)^s_\Omega$ is
$$D((-\Delta)^s_\Omega)=\{u\in H^s_0(\bar\Omega),\, (-\Delta)^s_\Omega u\in L^2(\Omega)\}.$$
Set $\mathcal{L}^\infty(\Omega):=\overline{D(A_\infty)}^{L^\infty(\Omega)},\, A_\infty=(-\Delta)^s_\Omega,$ where
$$D(A_\infty)=\{u\in D((-\Delta)^s_\Omega)\cap L^\infty(\Omega);\, Au\in L^\infty(\Omega)\},\, A_\infty=Au.$$
The fractional Sobolev space $H^s(\Omega)$ is defined by \cite[Section 2]{AB17}
$$H^s(\Omega)=\{u\in L^2(\Omega):\, |u|_{H^s(\Omega)}<\infty\},$$
where
\[
|u|_{H^s(\Omega)}=\left(\int\int\limits_{\Omega\times\Omega}\frac{|u(x)-u(\xi)|^2}{|x-\xi|^{n+2s}}dxd\xi\right)^{\frac{1}{2}}
\]
denotes the Gagliardo (Aronszajn-Slobodeckij) seminorm. The space $H^s(\Omega)$ is a Hilbert space; it is  equipped with the norm
\[
\|u\|_{H^s(\Omega)}=\|u\|_{L^2(\Omega)}+|u|_{H^s(\Omega)}.
\]
The space $H_0^s(\bar\Omega)$ consists of functions of $H^s(\bar\Omega)$ vanishing on the boundary of $\Omega.$

Our paper is motivated by the recent one of \cite{AAAKT15} in which global solutions and blowing-up solutions to the equation \eqref{1}, when $s=1$
are proved. Our problem \eqref{1}-\eqref{3} is a natural generalization of results in \cite{AAAKT15}. We will prove the existence of globally bounded solutions as well as blowing-up solutions under some condition
imposed on the initial data. Note that, similar studies for time-fractional and time-space-fractional reaction-diffusion equations were considered in \cite{AA18, AGKW19, DCCM15, CSWSS18, KSZ17, KLW16, VZ15, VZ17, Gal}.

We will use first eigenfunction $e_1(x) > 0$ associated to the first eigenvalue $\lambda_1 >0$ that satisfies the fractional eigenvalue problem \cite[Theorem 2.8]{Bras}
\begin{equation}\label{9}\begin{split}
                           &(-\Delta)_\Omega^se_n(x)=\lambda_n e_n(x), \,\,\,x\in\Omega,\\
                            & e_n(x)=0, \,\,\,\,\,\,\,\,\,\,\,\,\,\,\,\,\,\,\,\,\,\,\,\,\,x\in \mathbb{R}^n\setminus\Omega,
                         \end{split}
\end{equation} is normalized  such that $\displaystyle\int\limits_\Omega e_1(x)dx=1.$

The space $C^{0,\kappa}((0,T), L^\infty(\Omega))$ consists of functions that are continuous and $\kappa$-H\"{o}lderian with respect to $t.$
\section{Main results}
\begin{theorem}\begin{description}
  \item[(i)] Let $u_0(x)\in \mathcal{L}^\infty(\Omega)$ be such that $0\leq u_0(x)\leq 1.$ Then, problem \eqref{1}-\eqref{3} admits a global strong solution on $[T_0,\infty),$ for every $T_0>0,$ i.e., $u$ is a mild solution, and $$u\in C^{0,\kappa}((0,\infty), L^\infty(\Omega)),\,\,\,\textrm{for some} \,\,\,\kappa>0;$$ $$u(.,t)\in D(A_\infty),\,\,\,\textrm{for all}\,\,\,t>0,\,\,\,\textrm{and}\,\,\,\, \partial_t^\alpha u\in C((0,\infty), L^\infty(\Omega)).$$ The equation \eqref{1} is satisfied for $t>0.$ It satisfies $0\leq u(x,t)\leq 1$ for all $x\in \Omega,\,t>0$ and 
      \begin{equation}\label{7}\|u(t,\cdot)\|^2_{L^2(\Omega)}\leq \frac{C}{1+\nu t^\alpha}\|u_0\|^2_{L^2(\Omega)} \leq Ct^{-\alpha},\,\,\,t>0,\end{equation} for some positive constant $C.$
  \item[(ii)] If $1+\lambda_1\leq \displaystyle\int\limits_{\Omega}u_0(x)e_1(x)dx=H_0,$ then the solution of problem \eqref{1}-\eqref{3} blows-up in a finite time $T^*$ that satisfies the bi-lateral estimate
\begin{equation*}\left(\frac{\Gamma(\alpha+1)}{4(H_0+1/2)}\right)^{\frac{1}{\alpha}}\leq T^*\leq \left(\frac{\Gamma(\alpha+1)}{H_0}\right)^{\frac{1}{\alpha}}.\end{equation*}
\end{description}
 \end{theorem}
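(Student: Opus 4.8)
The plan is to treat the two assertions separately: part (i) by the comparison principle combined with time-fractional energy estimates, and part (ii) by the eigenfunction (Kaplan) method reduced to a scalar fractional differential inequality.

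For part (i), I would first record local existence: writing $A=(-\Delta)^s_\Omega$ with its $L^\infty$-realization $A_\infty$ and $f(u)=-u(1-u)=u^2-u$, which is locally Lipschitz, a contraction-mapping argument with the Mittag--Leffler solution operators $E_\alpha(-t^\alpha A)$ and $t^{\alpha-1}E_{\alpha,\alpha}(-t^\alpha A)$ produces a unique mild solution on a maximal interval $[0,T_{\max})$. Next I would prove the invariance $0\le u\le 1$: since the regional operator $(-\Delta)^s_\Omega$ annihilates constants on $\Omega$ and $f(0)=f(1)=0$, the constants $0$ and $1$ are respectively a sub- and a supersolution of \eqref{1}--\eqref{3} when $0\le u_0\le 1$, so the comparison principle for $\partial_t^\alpha+(-\Delta)^s_\Omega$ forces $0\le u(x,t)\le 1$. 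This uniform bound excludes blow-up, hence $T_{\max}=\infty$, and the standard smoothing estimates for time-fractional parabolic problems upgrade $u$ to a strong solution with the asserted regularity ($C^{0,\kappa}$ in $t$, $u(\cdot,t)\in D(A_\infty)$, $\partial_t^\alpha u\in C((0,\infty);L^\infty(\Omega))$, equation satisfied for $t>0$). For the decay \eqref{7}, I would test \eqref{1} with $u$ and use (a) the fractional chain-rule inequality $\int_\Omega u\,\partial_t^\alpha u\,dx\ge\tfrac12\,\partial_t^\alpha\|u\|_{L^2(\Omega)}^2$, (b) the variational bound $\int_\Omega u\,(-\Delta)^s_\Omega u\,dx\ge\lambda_1\|u\|_{L^2(\Omega)}^2$ coming from \eqref{9}, and (c) $-\int_\Omega u^2(1-u)\,dx\le 0$ (valid since $0\le u\le1$), to arrive at $\partial_t^\alpha y+2\lambda_1 y\le 0$ with $y=\|u\|_{L^2(\Omega)}^2$; comparing with the linear fractional ODE and invoking $0\le E_\alpha(-x)\le C/(1+x)$ gives \eqref{7} with $\nu=2\lambda_1$.

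For part (ii), assuming $u_0\ge 0$ in $\mathcal{L}^\infty(\Omega)$ (so that $u\ge 0$ on $[0,T_{\max})$ by comparison with $0$), I would introduce the Kaplan functional $H(t)=\int_\Omega u(x,t)e_1(x)\,dx$. Multiplying \eqref{1} by $e_1$, integrating over $\Omega$, using the self-adjointness relation $\int_\Omega e_1\,(-\Delta)^s_\Omega u\,dx=\lambda_1 H$ from \eqref{9} and Jensen's inequality $\int_\Omega u^2 e_1\,dx\ge H^2$ (legitimate because $e_1\,dx$ is a probability measure), one obtains
\[
\partial_t^\alpha H\ \ge\ H^2-(1+\lambda_1)H .
\]
Since $H(0)=H_0\ge 1+\lambda_1$, a bootstrap based on the identity $H(t)=H_0+I^\alpha\!\big[\partial_t^\alpha H\big](t)$ shows $H(t)\ge H_0$ for all $t$ in the existence interval; hence, setting $\widehat H=H-(1+\lambda_1)\ge 0$ and substituting, $\partial_t^\alpha\widehat H\ge\widehat H^2$. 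Comparison with the scalar model $\partial_t^\alpha\Phi=\Phi^2$ — whose positive solutions blow up in finite time, the blow-up time being bounded by $(\Gamma(\alpha+1)/\Phi(0))^{1/\alpha}$ through its Volterra representation — then forces $H$, and a fortiori $\|u(t)\|_{L^\infty(\Omega)}\ge H(t)$, to blow up at a finite $T^*\le(\Gamma(\alpha+1)/H_0)^{1/\alpha}$. For the lower bound on $T^*$ I would return to the mild formulation: since $u\ge0$ and $f(u)=u^2-u\le u^2$, a spatially homogeneous supersolution governed by $\partial_t^\alpha z=z^2$ dominates $u$, so the solution persists up to that ODE's blow-up time, whose lower estimate via the Riemann--Liouville integral yields the constant $(\Gamma(\alpha+1)/4(H_0+1/2))^{1/\alpha}$.

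The main obstacles are twofold. The first is the comparison/maximum principle for the doubly nonlocal operator $\partial_t^\alpha+(-\Delta)^s_\Omega$, which underpins both $0\le u\le1$ in part (i) and the supersolution bound in part (ii): one must combine the sign of $\partial_t^\alpha v$ at a time where $v$ attains its running maximum with the sign of the regional fractional Laplacian at a spatial maximum point, and treat $f$ by a global-Lipschitz truncation outside $[0,1]$. The second is the quantitative blow-up/non-blow-up times for the scalar fractional inequalities $\partial_t^\alpha z\ge z^2$: since no closed-form solution is available for $\alpha<1$, the sharp constants in the bilateral estimate must be extracted from the fractional-integral representation together with a careful bootstrap, and the borderline case $H_0=1+\lambda_1$ requires the strict form of Jensen's inequality, which does hold because a function in $\mathcal{L}^\infty(\Omega)$ cannot be a nonzero spatial constant.
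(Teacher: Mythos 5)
Your overall architecture matches the paper's: part (i) is the invariance of $[0,1]$ plus an energy/Mittag--Leffler decay estimate, and part (ii) is Kaplan's eigenfunction method reducing to a scalar fractional differential inequality; the derivation of $\partial^\alpha H+(1+\lambda_1)H\ge H^2$ and of the decay bound \eqref{7} are essentially the paper's. For the invariance step you take a genuinely different route: you compare with the constant sub- and supersolutions $0$ and $1$ (using that the regional fractional Laplacian annihilates constants and $f(0)=f(1)=0$), which requires the pointwise comparison principle for $\partial_t^\alpha+(-\Delta)_\Omega^s$ that you yourself flag as the main obstacle. The paper avoids that machinery entirely: it tests the equation with $\tilde u=\min(u,0)$ and $\hat u=\min(1-u,0)$, uses the coercivity inequality \eqref{*} together with $2w\,\partial^\alpha w\ge\partial^\alpha w^2$, and concludes $\int_\Omega\tilde u^2=\int_\Omega\hat u^2=0$ from a fractional Gronwall argument with zero initial data. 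Both routes are viable; the energy route is more self-contained here.

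The genuine gap is in the bilateral estimate for $T^*$. The paper keeps the linear term, reduces to $\partial^\alpha\tilde H\ge\tilde H(\tilde H+1)$, and cites \cite[Formula (8)]{DFR14} for both bounds. You instead drop it, obtaining $\partial^\alpha\widehat H\ge\widehat H^2$ with $\widehat H(0)=\widehat H_0=H_0-(1+\lambda_1)$; comparison with $\partial^\alpha\Phi=\Phi^2$, $\Phi(0)=\widehat H_0$, yields at best $T^*\le\bigl(\Gamma(\alpha+1)/\widehat H_0\bigr)^{1/\alpha}$, which is strictly weaker than the asserted $\bigl(\Gamma(\alpha+1)/H_0\bigr)^{1/\alpha}$ and is vacuous in the admissible borderline case $\widehat H_0=0$, where $\widehat H\equiv0$ satisfies your inequality. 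The lower bound is worse off: a spatially homogeneous supersolution $z$ of $\partial^\alpha z=z^2$ must start at $z(0)\ge\|u_0\|_{L^\infty(\Omega)}$, and since $H_0=\int_\Omega u_0e_1\,dx\le\|u_0\|_{L^\infty(\Omega)}$ (recall $\int_\Omega e_1=1$, and part (ii) imposes no upper bound on $u_0$), the persistence time you obtain, $\bigl(\Gamma(\alpha+1)/(4(\|u_0\|_{L^\infty}+1/2))\bigr)^{1/\alpha}$ say, is \emph{smaller} than the claimed $\bigl(\Gamma(\alpha+1)/(4(H_0+1/2))\bigr)^{1/\alpha}$ and therefore cannot establish it; no $L^\infty$-supersolution argument can produce a lower bound governed by the average $H_0$ alone. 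To recover the stated constants you would need to retain the inequality $\partial^\alpha\tilde H\ge\tilde H(\tilde H+1)$ and reproduce (or invoke) the quantitative scalar analysis of \cite{DFR14}, rather than the cruder comparisons you propose.
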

\begin{proof} The proof  is divided in steps.\\
\textbf{Global existence.} Firstly, we show that $u\ge 0.$ Multiplying scalarly in ${{L}^{2}}(\Omega)$ equation \eqref{1} by $\tilde{u}:=\min \left( u,0 \right),$ we obtain
\begin{align*}\int\limits_{\Omega}\partial_{t}^{\alpha }\tilde{u}\cdot\tilde{u}dx+\int\limits_{\Omega}(-\Delta)_\Omega^s\tilde{u}\cdot\tilde{u}\, dx=\int\limits_{\Omega}\tilde{u}^2\left(\tilde{u}-1 \right) \, dx.
\end{align*}
Using the estimate \cite[Proposition 2.4]{AB17}
\begin{equation}\label{*}0\leq\|u\|^2_{L^2(\Omega)}\lesssim \int\limits_{\Omega}\int\limits_{\Omega}\frac{|u(x)-u(y)|^2}{|x-y|^{n+2s}} \, dxdy\lesssim \left((-\Delta)_\Omega^su, u\right), \,u\in H^s(\Omega),
\end{equation}
\noindent follows
\[
-\int\limits_{\Omega}(-\Delta)_\Omega^s\tilde{u}\cdot\tilde{u} \, dx\leq 0.
\]
Using the inequality \cite[Remark 3.1]{AAK17}
\begin{equation*}
2w(t)\partial^\alpha w(t)\geq\partial^\alpha w^2(t),\,w\in C^1(0,\infty),\end{equation*}
via an approximating process, we obtain
\begin{equation}\label{5}\partial_{t}^{\alpha }\int\limits_{\Omega}\tilde{u}^2\left( x,t \right)dx\lesssim \int\limits_{\Omega}\tilde{u}^{2}\left( x,t \right)dx.
\end{equation}

Setting  $\varphi(t)=\int\limits_{\Omega}\tilde{u}^{2}\left( x,t \right)dx$ in \eqref{5}, we get
\begin{equation*}\left\{\begin{array}{l}
\partial^{\alpha }\varphi(t)\lesssim\varphi(t),\\
\varphi(0)=0,\end{array}\right.\end{equation*}
which implies $\displaystyle\int\limits_{\Omega}\tilde{u}^{2}\left( x,t \right)dx=0$ as $\tilde{u}\left( x,0 \right)=0$, hence $\tilde{u}=0$; whereupon $u\geq 0.$

Now, we show the upper estimate $u\le 1.$ Multiplying scalarly in ${{L}^{2}}(\Omega)$ equation \eqref{1} by $\hat{u}:=\min \left( 1-u,0 \right),$ we obtain
\begin{align*}\int\limits_{\Omega}\partial_{t}^{\alpha }\hat{u}\cdot\hat{u}dx+\int\limits_{\Omega}(-\Delta)_\Omega^s\hat{u}\cdot\hat{u}dx=\int\limits_{\Omega}\hat{u}^2\left(\hat{u}-1 \right)dx.\end{align*}
Repeating the above calculations for $\hat{u}:=\min \left( 1-u,0 \right)$, we get
\begin{equation*}\partial_{t}^{\alpha }\int\limits_{\Omega}\hat{u}^2\left( x,t \right)dx\lesssim\int\limits_{\Omega}\hat{u}^{2}\left( x,t \right)dx.
\end{equation*}
This implies that $\int\limits_{\Omega}\hat{u}^{2}\left( x,t \right)dx=0$ as $\hat{u}\left( x,0 \right)=0$, hence $\hat{u}=0$; whereupon $u\leq 1.$ The result follows as $0\le u\le 1.$ Consequently, the solution exists on $[0, +\infty)$.\\

\textbf{Decay estimate.} Since $0 \leq u\leq 1$, then  $-u + u^2 \leq 0.$
So, $u$ satisfies to the equation
\begin{equation}\label{8}
\partial^\alpha_tu+(-\Delta)_\Omega^su\leq 0, x\in \Omega,\,t>0,
\end{equation}
with initial and boundary conditions \eqref{2}, \eqref{3}.

Multiplying scalarly equation \eqref{8} by $u$ and using Poincar\'{e}-type inequality \eqref{*}, we obtain
\begin{align*}\partial^\alpha E(t)+\nu E(t)\leq 0,\,\,t>0,\end{align*} where we have set $E(t)=\displaystyle\int\limits_{\Omega}u^2dx.$

By comparison, we have $E(t)\leq \bar{E}(t),$ where $\bar{E}(t)$ is the solution of the problem
\begin{equation*}\partial^\alpha\bar{E}(t)+\nu\bar{E}(t)=0,\,t>0,
\end{equation*}
\begin{equation*}\bar{E}(0)=E_0=\int\limits_{\Omega}u^2_0(x)dx,
\end{equation*}
whose unique solution is
\[
\bar{E}(t)=E_0 E_{\alpha}(-\nu t^\alpha),
\]
where $E_{\alpha}(z)$ is the Mittag-Leffler function \cite[page 40]{Kilbas}:
\[
E_{\alpha}(z)=\sum\limits_{m=0}^\infty\frac{z^m}{\Gamma(\alpha m+1)}.
\]
The following Mittag-Leffler function's estimate is known \cite[Theorem 4]{Simon}:
\[
E_{\alpha}(-z)\lesssim \frac{1}{1+z},\,\,z>0.
\]
Then by comparison principle \cite{JiaLi} with the equation \begin{equation*}
\partial^\alpha_t\bar{u}+(-\Delta)_\Omega^s\bar{u}=0, x\in \Omega,\,t>0,
\end{equation*}
with initial-boundary conditions \eqref{2}, \eqref{3}, the estimate \eqref{7} then follows.\\

\textbf{Blow-up.} Multiplying equations \eqref{1} by $e_1(x)$ and integrating over $\Omega,$ we obtain
\begin{equation}\label{10}\begin{split}\partial^\alpha\int\limits_{\Omega}u(x,t)e_1(x)dx& +\int\limits_{\Omega}\left(-\Delta\right)_\Omega^su(x,t)e_1(x)dx \\&=\int\limits_{\Omega}u(x,t)\left(u(x,t)-1\right)e_1(x)dx.\end{split}
\end{equation}
Since
\begin{align*}
\int\limits_{\Omega}\left(-\Delta\right)_\Omega^su(x,t)e_1(x)dx&=\int\limits_{\Omega}u(x,t)\left(-\Delta\right)_\Omega^se_1(x)dx \\&=\lambda_1\int\limits_{\Omega}u(x,t)e_1(x) \,dx,\end{align*}
as $u=0,\,e_1(x)=0,\,\,\,x\in \mathbb{R}^n\setminus\Omega$ and
\[
\left( \int\limits_\Omega u(x,t)e_1(x) \,dx\right)^{2}  \leq \int\limits_\Omega u^2(x,t)e_1(x) \,dx,
\]
the function $H(t) = \displaystyle\int\limits_\Omega u(x,t)e_1(x) \,dx $ then satisfies
\begin{equation}\label{11}\partial^\alpha H(t)+(1+\lambda_1) H(t)\geq H^2(t).
\end{equation}
Let $\tilde{H}(t)=H(t)-(1+\lambda_1),$ then from \eqref{11}, we get
\begin{equation}\label{12}\partial^\alpha \tilde{H}(t)\geq \tilde{H}(t)\left(\tilde{H}(t)+1+\lambda_1\right)\geq \tilde{H}(t)\left(\tilde{H}(t)+1\right).
\end{equation}
As $0\leq \tilde{H}_0=\tilde{H}(0),$ then from the results in \cite[Formula (8)]{DFR14}, the solution of inequality \eqref{12} blows-up in a finite time.\\
The proof of the Theorem is complete.
\end{proof}

\section*{Acknowledgements} The research of Alsaedi and Kirane is supported by NAAM research group, University of King Abdulaziz, Jeddah. The research of Torebek is financially supported in parts by the FWO Odysseus 1 grant G.0H94.18N: Analysis and Partial Differential Equations and by a grant No.AP08052046 from the Ministry of Science and Education of the Republic of Kazakhstan. No new data was collected or generated during the course of research

\end{document}